\newtheorem{theorem}{Theorem}
\newtheorem{corollary}[theorem]{Corollary}
\newtheorem{lemma}[theorem]{Lemma}
\newtheorem{remark}[theorem]{Remark}
\def\qed{\vbox{\hrule
 \hbox{\vrule\hbox to 5pt{\vbox to 8pt{\vfil}\hfil}\vrule}\hrule}}
\journal{.}
\begin{document}

\begin{frontmatter}

\title{On the energy of digraphs}

\author{Juan R. Carmona}

\address{Facultad de Ciencias -- Instituto de ciencias F\'{i}sicas y Matem\'{a}ticas\\
  Universidad Austral de Chile, Independencia 631 -- Valdivia -- Chile. 
 }

\ead{juan.carmona@uach.cl, jcarmona.math@gmail.com}

\begin{abstract}

\noindent 

Let $D$ be a simple digraph with eigenvalues $z_1,z_2,...,z_n$. The energy of $D$ is defined as
$E(D)= \sum_{i=1}^n |Re(z_i)|$, is the real part of the eigenvalue $z_i$.
In this paper a lower bound will be obtained for the spectral radius of $D$,
wich improves some the lower bounds that appear in the literature \cite{G-R}, \cite{T-C}.
This result allows us to obtain an upper bound for the energy of $ D $.
Finally, digraphs are characterized in which this upper bound improves the bounds given in \cite{G-R} and \cite{T-C}.
\end{abstract}

\begin{keyword}
Energy of a digraph, upper bound, spectral radius, lower bound.
\end{keyword}

\end{frontmatter}

\section{Introduction and preliminaries}
Let $D = (V,\Gamma)$ denote a digraph of order $n$, where $V = \{v_1, v_2,..., v_n\}$ 
is the set of vertices of $D$ with $|V| = n$, and  $\Gamma$ is the set of arcs
consisting of ordered pairs of distinct vertices. 
We only consider digraphs with no loops and no miltiple arcs. Two vertices 
$u$ and $v$ of $D$ are called adjacent if they are connected by an arc 
$(u, v) \in \Gamma$, or $(v, u) \in \Gamma$, and doubly adjacent if
$(u, v),(v, u) \in \Gamma$. For any vertex $v_i$, let 
$\overleftrightarrow{N_i}=\{v_j\in V: (v_i,v_j), (v_j,v_i) \in \Gamma\}$ 
denote the vertices doubly adjacents of $v_i$.

A walk $\pi$ of length $l$ from vertex $u$
to vertex $v$ is a sequence of vertices $\pi : u = u_0, u_1,..., u_l = v$, where $(u_{k-1}, u_k)$ is an arc of $D$ for
any $1 \leq k \leq l$. If $u = v$ then $\pi$ is called a closed walk.
A digraph $D$ is strongly connected if for every pair of distinct vertices $ u, v$ of $D$, there exists a walk
from $u$ to $v$ and a walk from $v$ to $u$. A strong component of a digraph $D$ is a maximal subdigraph with
respect to the property of being strongly connected.

Denote the number of closed walks of length $2$
of associated vertex $v_i \in V$ by $c ^{(i)}_2$. Note that $c ^{(i)}_2=\left|\overleftrightarrow{N_i}\right|.$ 
The sequence $(c^{(1)}_2 , c ^{(2)}_2 ,..., c ^{(n)}_2)$ is called closed walk sequence of
length $2$ of $D$. Thus $c_2 =c^{(1)}_2 + c ^{(2)}_2+ \cdots +c ^{(n)}_2$ is the number of all closed walks of length $2$ of $ D$. Denote by $t^{(i)}_2$ the sum of the all closed walks of legth $2$ of the vertices doubly adjacente to $v_i$. Then, $$t^{(i)}_2=\displaystyle \sum_{v_j\in \overleftrightarrow{N_i}}c ^{(j)}_2,$$

 A d\'igraph $D = (V, \Gamma)$ is symmetric if for any $(u, v) \in \Gamma$  also $(v, u) \in \Gamma$, where $u, v \in V$.
A one-to-one correspondence between simple graphs and symmetric digraphs is given by $G\rightarrow \overleftrightarrow{G}$, where
$\overleftrightarrow{G}$ has the same vertex set as the graph G, and each edge $uv$ of $G$ is replaced by a pair of
symmetric arcs $(u, v)$ and $(v, u)$. Under this correspondence, a graph can be identified with a symmetric
digraph.

The  adjacency  matrix $A$ of  the  digraph $D$ is  a  $0–1$  matrix  of  order $n$ with  entries $a_{ij}$,
such  that $a_{ij}=  1$  if $(v_i,v_j) \in  \Gamma$  and $a_{ij}=  0$ otherwise.
Denote the characteristic polynomial of matrix $A$ as $\Phi_{D}(x)$ and 
its eigenvalues  $z_1,z_2,...z_n$, possibly complex (see \cite{C-D-S}, \cite{H-J}.)
The spectral radius of a digraph $D$ is denoted by $\rho=\rho(D)$ 
and defined as $$\displaystyle \rho=\max_{i=1,...,n}\{|z_i|\},$$
where $|z_i|$ denote the modulus of $z_i$.

Ivan Gutman in \cite{G}, introduced the concept of energy for a simple graph $ G $,  as
$$E(G)=\displaystyle \sum_{i=1}^n |\lambda_i|,$$
where $\lambda_1,\lambda_2,...,\lambda_n$ are the eigenvalues of the graph $G$. Details of the
theory of graph energy can be found in the reviews \cite{G1},\cite{G2} and the book \cite{L-S-G}. 
New results of this theory appear in
\cite{A-C-I-R},\cite{Bozkurt},\cite{G-F-Z-G},\cite{J}. It is well known \cite{C} that if $G$ is a graph with $n$ vertices then 
$$E(G)=\displaystyle \frac1\pi \int_{-\infty}^{\infty}\left(n-\frac{ix\Phi_{G}'(ix)}{\Phi_{G}(ix)}\right)dx$$

Pe\~{n}a and Rada in \cite{P-R} showed that for a digraph of $n$ vertices, you have to $$\displaystyle \frac1\pi \int_{-\infty}^{\infty}\left(n-\frac{ix\Phi_{D}'(ix)}{\Phi_{D}(ix)}\right)dx=\sum_{i=1}^{n}|Re(z_i)|,$$ extending the concept of energy for the case of digraphs as
$$E(D)=\sum_{i=1}^n |Re(z_i)|,$$
where $z_1,...,z_n$ are the eigenvalues of $D$ and $Re(z_i)$ 
denotes the real part of $z_i$. For more details about the energy of digraphs,
see \cite{A-B-G},\cite{B},\cite{C-D-S},\cite{C-G-R},\cite{G-R},\cite{L-R},
\cite{P-R},\cite{R},\cite{R2}  and thereferences therein.

In \cite{R}, Rada generalizes the McClelland inequality for any digraph $D$ with $n$ vertices, $a$ arcs
and $c_2$ closed walks of length $2$.
\begin{eqnarray}\label{Mc}
 E(D)\leq \sqrt{\dfrac{n(a+c_2)}{2}},
\end{eqnarray}
with equality in (\ref{Mc}) if and only if $D$ is the direct sum of $\frac n
2$ copies of $\overleftrightarrow{K_2}$.
in order to  obtain an upper bound for energy of a digraph,  Gudi\~{n}o and Rada
in \cite{G-R}, generalizing the idea in \cite{K-M}, showed that the following relation holds:
\begin{eqnarray}\label{1}
 E(D)\leq \rho + \sqrt{(n-1)(a-\rho^{2})}.
\end{eqnarray}
 Then, using the inequality $\dfrac{c_2}{n}\leq \rho$, see \cite{G-R}, they obtained the upper bound
\begin{equation}\label{001}
    E(D) \leq \frac{c_2}{n}+\sqrt{(n-1)\left(a-\left(\frac{c_2}{n}\right)^2\right)}.
\end{equation}
Equality holds in (\ref{001}) if and only if $D$ is
either the empty digraph or $D =\overleftrightarrow{G}$,
where $G$ is either $\frac n2K_2,K_n$, a non-complete 
connected strongly regular graph with two non-trivial 
eigenvalues both with absolute value 
$\sqrt{\frac{\left(a-\left(\frac{c_2}{n}\right)^2\right)}{(n-1)}}$.

Tian and Cui in \cite{T-C}, improve the upper bound (\ref{001}) with the following result
\begin{equation}\label{1}
    E(D) \leq \sqrt{\frac1n \sum_{i=1}^{n}\left(c_2^{(i)}\right)^{2}}+\sqrt{(n-1)\left(a-\frac1n \sum_{i=1}^{n}\left(c_2^{(i)}\right)^{2}\right)}.
\end{equation}
The equality in (\ref{1}) holds if and only if 
$D =\overleftrightarrow{G}$, where $G$ is either
$\frac n2K_2,K_n$, a non-complete connected strongly
regular graph with two non-trivial eigenvalues both
with absolute value 
$\sqrt{\frac{\left(a-\frac{\sum_{i=1}^{n}\left(c_2^{(i)}\right)^{2}}{n} \right)}{(n-1)}},$ or $nK_1.$

In this work, motivated by the strategies used in \cite{Y-L-T} and \cite{T-C}, we get a lower bound for the spectral radius $\rho$ in terms of $n$, $c_2^{(i)}$ and $t_2^{(i)}$, improving the known results. Using this result we obtain an upper bound of $E(G)$ in terms of $n$,$a$, $c_2^{(i)}$ and $t_2^{(i)}$. In addition, we will show that this bound improving and generalize the bounds given for graphs and digraphs in \cite{G-R}, \cite{T-C} and \cite{Y-L-T}.

 \section{Improving lower bound on the spectral radius of a digraph}
Obtaining lower bounds for the spectral radius $\rho$ of a digraph $D$  is essential to obtain new bounds for $ E(D) $, see section \ref{s3}.
\begin{remark}\label{REM}
Recall that for an $n$-by-$n$ matrix $A = (a_{ij})$, its geometric symmetrization, denoted by $S(A) = (s_{ij})$, is
the matrix with entries $s_{ij} = \sqrt{a_{ij}a_{ji}}$ for any $i, j = 1, 2,..., n$.Thus, it is holds that
\begin{itemize}
    \item [i.-] $c^{(i)}_2 =\displaystyle \sum_{j=1}^n s_{ij}$ for any vertex
$v_i \in V$. 
    \item [ii.-]  $\displaystyle \sum_{i=i}^{n}t^{(i)}_2=\sum_{i=1}^{n}\left(c_2^{(i)}\right)^{2}$.
    
    \item [iii.-] $\rho(A)\geq \rho(S(A))=\sqrt{\rho(S(A^2))}$.
\end{itemize}
\end{remark}

In \cite{G-R} obtained the following theorem:
\begin{theorem}[Gudi\~{n}o and Rada]
Let $D$ be a digraph with $n$ vertices and $c_2$ closed walks of length $2$. Then
\begin{equation}\label{c-i}
    \rho(D)\geq \dfrac{c_2}{n}
\end{equation}
Equality holds if and only if
$$D = \overleftrightarrow{G} +\{\text{possibly some arcs that do not belong to cycles}\},$$
where $G$ is a $\frac{c_2}{n}$-regular graph.
\end{theorem}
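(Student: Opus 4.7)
The plan is to combine part (iii) of Remark \ref{REM}, which yields $\rho(A)\geq\rho(S(A))$, with a standard Rayleigh quotient estimate for the symmetric nonnegative matrix $S(A)$.

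First I would note that, because $S(A)$ is symmetric and nonnegative, its spectral radius equals its largest eigenvalue and admits the Rayleigh characterization $\rho(S(A))=\max_{x\neq 0} x^{T}S(A)x / x^{T}x$. Testing the all-ones vector $\mathbf{1}$ yields
\begin{equation*}
\rho(S(A)) \;\geq\; \frac{\mathbf{1}^{T}S(A)\mathbf{1}}{n} \;=\; \frac{1}{n}\sum_{i,j} s_{ij} \;=\; \frac{1}{n}\sum_{i=1}^{n} c_{2}^{(i)} \;=\; \frac{c_{2}}{n},
\end{equation*}
where the second-to-last equality is part (i) of Remark \ref{REM}. Concatenating with part (iii) gives the bound $\rho(D)\geq c_{2}/n$.

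For the equality analysis I would separate the two inequalities used above. The Rayleigh bound is tight exactly when $\mathbf{1}$ is a Perron eigenvector of $S(A)$, which forces $S(A)\mathbf{1}=(c_{2}/n)\mathbf{1}$; read componentwise, this says $c_{2}^{(i)}=c_{2}/n$ for every $i$, i.e., the double-adjacency graph $G$ (whose adjacency matrix is $S(A)$) is $c_{2}/n$-regular. For the second inequality, I would invoke the strong-component decomposition of $D$: after a suitable permutation, $A$ is block-upper-triangular with diagonal blocks equal to the adjacency matrices of the strong components, and $\rho(A)$ is the maximum over these diagonal blocks. An arc that is not doubly adjacent, placed \emph{inside} a strong component, would, by irreducibility and strict Perron monotonicity on nonnegative irreducible matrices, push the Perron value of that block strictly above that of its geometric symmetrization, breaking $\rho(A)=\rho(S(A))$. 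Hence every unpaired arc of $D$ must cross between distinct strong components, which is exactly the condition that it lies on no cycle. Conversely, if $D=\overleftrightarrow{G}+F$ with $G$ being $c_{2}/n$-regular and $F$ consisting of arcs outside every cycle, the block-triangular form shows $\rho(A)=\max_{i}\rho(G_{i})=c_{2}/n$, closing the equivalence.

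The main obstacle is making the strict Perron monotonicity step precise in the equality analysis: one must verify rigorously that whenever an irreducible nonnegative matrix has an entry strictly larger than the corresponding entry of its symmetrization, its Perron value strictly exceeds that of the symmetrization, and then apply this blockwise. Once that monotonicity is in hand, the characterization follows by a clean case split on whether each unpaired arc sits inside a strong component or crosses between two of them.
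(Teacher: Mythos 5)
This statement is quoted in the paper as a known result of Gudi\~{n}o and Rada (cited from \cite{G-R}) and is not proved there; the closest in-paper argument is the proof of Theorem \ref{teo}, which is exactly your template run with the test vector $c=(c_2^{(1)},\dots,c_2^{(n)})^T$ and $S(A)^2$ in place of $\mathbf{1}$ and $S(A)$: a Rayleigh-quotient lower bound on the symmetrization followed by an equality analysis split over strongly connected digraphs, direct sums of strong components, and arcs lying on no cycle. Your inequality half is correct and is precisely the $x=\mathbf{1}$ specialization of that argument, using Remark \ref{REM} (i) and (iii).

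There is, however, one step in your equality analysis that does not work as written. You claim that an unpaired arc sitting inside a strong component would, via strict Perron monotonicity on the irreducible block, ``break $\rho(A)=\rho(S(A))$.'' But $\rho(A)=\max_k\rho(A_k)$ and $\rho(S(A))=\max_k\rho(S(A_k))$ (the latter because a doubly adjacent pair always lies inside one strong component, so $S(A)$ is block diagonal over the strong components); raising $\rho(A_k)$ strictly above $\rho(S(A_k))$ for a block that does \emph{not} attain the maximum disturbs neither maximum, so no contradiction is immediate. The repair comes from the first half of your own analysis: tightness of the Rayleigh step forces $S(A)\mathbf{1}=(c_2/n)\mathbf{1}$, hence every diagonal block $S(A_k)$ has constant row sums $c_2/n$ and $\rho(S(A_k))=c_2/n$ for \emph{every} $k$, not only the maximal one. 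Combined with $\rho(A_k)\geq\rho(S(A_k))$ and $\max_k\rho(A_k)=c_2/n$, this yields $\rho(A_k)=\rho(S(A_k))$ for all $k$, and only then can the Berman--Plemmons strict monotonicity be applied blockwise to conclude $A_k=S(A_k)$, i.e.\ that every arc inside a strong component is doubly adjacent. With that one observation inserted, the characterization and its converse (where arcs lying on no cycle leave the spectrum, hence $\rho$, unchanged) go through as you describe.
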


In \cite{T-C}, a better lower bound is presented
\begin{theorem}[Tian and Cui]\label{T-C}
Let $D$ digraph with $n$ vertices.
Also let $c_2^{(1)},c_2^{(2)},...,c_2^{(n)}$
be the closed walk sequence of length $2$ of $D$. Then 
\begin{equation}\label{-cotaindice}
    \rho(D)\geq \sqrt{\frac{\sum_{i=1}^{n}\left(c_2^{(i)}\right)^{2}}{n}},
\end{equation}
whit equality in (\ref{-cotaindice}) if only if
$$D =\overleftrightarrow{G}+ \{ \textit{possibly some arcs that do not belong to cycles}\},$$
where each connected component of $G$ is either an $r$-regular
graph or an $(r_1, r_2)$-semiregular bipartite graph,
satisfying $r_1r_2 =\frac{\sum_{i=1}^{n}\left(c_2^{(i)}\right)^{2}}{n}$.
\end{theorem}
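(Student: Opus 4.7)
The plan is to exploit the geometric symmetrization $S = S(A)$ from Remark \ref{REM} together with a Rayleigh-quotient estimate applied to $S^2$. The symmetrization reduces the problem from a general (possibly non-normal) digraph matrix to the much friendlier setting of a symmetric nonnegative matrix, at the cost of the inequality $\rho(A) \geq \rho(S(A))$; this cost is the source of the first half of the equality condition.

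First, I would record the two facts from Remark \ref{REM} that make everything work. By item (i), the $i$-th row sum of $S$ equals $c_2^{(i)}$, so $S\mathbf{1} = (c_2^{(1)},\ldots,c_2^{(n)})^{T}$ and hence $\|S\mathbf{1}\|^{2} = \sum_{i=1}^{n}(c_2^{(i)})^{2}$. By item (iii), $\rho(D) = \rho(A) \geq \rho(S) = \sqrt{\rho(S^{2})}$. Since $S^{2}$ is symmetric and positive semidefinite, the Rayleigh quotient gives
\begin{equation*}
\rho(S^{2}) \;\geq\; \frac{\mathbf{1}^{T} S^{2}\mathbf{1}}{\mathbf{1}^{T}\mathbf{1}} \;=\; \frac{\|S\mathbf{1}\|^{2}}{n} \;=\; \frac{\sum_{i=1}^{n}(c_2^{(i)})^{2}}{n}.
\end{equation*}
Taking square roots and chaining inequalities yields the desired bound (\ref{-cotaindice}). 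This core argument is only a few lines and should pose no obstacle.

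The delicate part is the equality characterization, which requires matching two separate equality cases. Equality in the Rayleigh quotient forces $\mathbf{1}$ to be an eigenvector of $S^{2}$ associated with $\rho(S^{2})$; equivalently, $S^{2}\mathbf{1}$ is a constant multiple of $\mathbf{1}$, meaning $\sum_{k}s_{ik}\,c_2^{(k)}$ is independent of $i$. Combined with the Perron--Frobenius theory applied componentwise to the underlying weighted graph of $S$, this yields exactly the dichotomy that each connected component of the support of $S$ is either $r$-regular or $(r_1,r_2)$-semiregular bipartite with $r_1 r_2 = \sum_i (c_2^{(i)})^2/n$. This part is standard once one identifies the support of $S$ with the symmetric part of $D$, but needs care to handle disconnected or bipartite components separately, since on a bipartite component $\mathbf{1}$ is a Perron vector of $S^{2}$ but not of $S$.

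Equality in item (iii), namely $\rho(A) = \rho(S(A))$, must also be analyzed. Writing $A = S + N$ where $N$ accounts for the asymmetric part, the inequality $\rho(A) \geq \rho(S(A))$ comes (via an AM--GM or Schwarz argument on walk counts) from $(A^{k})_{ii} \geq (S^{k})_{ii}$; tracking when this is tight shows that the extra asymmetric arcs cannot participate in any closed walk, i.e.\ they do not lie on any directed cycle. This is exactly the ``possibly some arcs that do not belong to cycles'' clause in the statement. The main obstacle, then, is combining these two equality analyses cleanly, in particular ensuring that when components are bipartite one still gets a Perron eigenvector of $S$ itself (using the $(r_1,r_2)$-semiregular structure) so that the Rayleigh equality actually attains $\rho(S)^{2}$ rather than a smaller eigenvalue of $S^{2}$.
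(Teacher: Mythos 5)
Your argument is correct and is essentially the paper's own approach: the paper states this result of Tian and Cui without proof, but its proof of Theorem~\ref{teo} uses exactly your template --- the Rayleigh quotient of $S(A)^2$ at a test vector (there $c=(c_2^{(1)},\ldots,c_2^{(n)})^T$, here $\mathbf{1}$, with $S(A)\mathbf{1}=c$ giving the same numerator $\sum_i(c_2^{(i)})^2$), combined with Remark~\ref{REM}(iii) and a case analysis (strongly connected, direct sum of strong components, arcs not on cycles) for equality. Your sketch of the equality case for $\rho(A)=\rho(S(A))$ is the only place that is looser than the paper's treatment, which handles it via irreducibility of $A$ on strong components and the invariance of the characteristic polynomial under deletion of arcs not on cycles, rather than a walk-count argument.
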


Now, we give the following lemma which is important for finding improved lower bound
for the spectral radius of D.
\begin{lemma}{\cite{R}}\label{LEMA}
Let $D$ be a digraph with $n$ vertices, $a$ arcs and $c_2$ closed walks of length $2$. If $z_1, z_2,...,z_n$
are the eigenvalues of $D$, then
\begin{itemize}
    \item[i.-]$\displaystyle \sum_{i=1}^n\left(Re(z_i)\right)^{2}-\sum_{i=1}^n\left(Im(z_i)\right)^{2}=c_2;$
    \item[ii.-]  $\displaystyle \sum_{i=1}^n\left(Re(z_i)\right)^{2}+\sum_{i=1}^n\left(Im(z_i)\right)^{2}\leq a.$
\end{itemize}
\end{lemma}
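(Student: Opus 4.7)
The plan is to express both sides through traces of matrices built from the adjacency matrix $A$ of $D$, exploiting the fact that $A$ is real, so its characteristic polynomial has real coefficients and its non-real eigenvalues appear in complex-conjugate pairs.

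For part (i), I would compute $\mathrm{tr}(A^2)$ in two ways. Combinatorially, since $a_{ij}\in\{0,1\}$, one has $(A^2)_{ii}=\sum_j a_{ij}a_{ji}$, which counts the vertices doubly adjacent to $v_i$; summing over $i$ gives exactly $c_2$. Spectrally, $\mathrm{tr}(A^2)=\sum_{i=1}^n z_i^2$. Writing $z_i=Re(z_i)+i\,Im(z_i)$ yields $z_i^2=(Re(z_i))^2-(Im(z_i))^2+2i\,Re(z_i)Im(z_i)$, and because non-real eigenvalues occur in conjugate pairs, the imaginary contributions cancel in the sum. Equating the two expressions for $\mathrm{tr}(A^2)$ gives (i).

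For part (ii), I would invoke Schur's inequality: for any $n\times n$ complex matrix $A$ with eigenvalues $z_1,\ldots,z_n$, one has $\sum_i |z_i|^2\le \|A\|_F^2=\mathrm{tr}(A^*A)$. This is immediate from the Schur triangularization $A=UTU^*$ with $T$ upper triangular and $t_{ii}=z_i$, since $\|A\|_F^2=\|T\|_F^2\ge \sum_i |t_{ii}|^2$. For the $0$-$1$ adjacency matrix, $\mathrm{tr}(A^*A)=\sum_{i,j}a_{ij}^2=\sum_{i,j}a_{ij}=a$. Combined with $|z_i|^2=(Re(z_i))^2+(Im(z_i))^2$ this yields the stated inequality.

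The only subtleties are the cancellation of the imaginary parts in (i), which needs the conjugate-pair structure of eigenvalues of a real matrix, and having Schur's inequality at hand for (ii); both are standard, so no digraph-specific ideas are required beyond the walk interpretation of the diagonal of $A^2$ and the observation that $a_{ij}^2=a_{ij}$ for a $0$-$1$ matrix.
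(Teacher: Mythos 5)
Your proof is correct and complete: the trace computation $\mathrm{tr}(A^2)=\sum_i z_i^2=c_2$ with cancellation of the imaginary cross-terms gives (i), and Schur's inequality $\sum_i|z_i|^2\le\mathrm{tr}(A^*A)=a$ gives (ii). The paper does not reproduce a proof (it cites the lemma from Rada's work), but your argument is precisely the standard one used there, so there is nothing to add.
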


A first main result in this work is the following
\begin{theorem}\label{teo}
Let $D$ digraph with $n$ vertices, with sequences $c_2^{(1)},c_2^{(2)},...,c_2^{(n)}$ and $t^{(1)}_2,t^{(2)}_2,...,t^{(n)}_2$. Then 
\begin{equation}\label{cotaindice}
    \rho(D)\geq \sqrt{\frac{\sum_{i=1}^{n}\left(t_2^{(i)}\right)^{2}}{\sum_{i=1}^{n}\left(c_2^{(i)}\right)^{2}}},
\end{equation}
whit equality in (\ref{cotaindice}) if only if 
$$D =\overleftrightarrow{G}+ \{ \textit{possibly some arcs that do not belong to cycles}\},$$ 
where each connected component of $G$ is either an $r$-regular graph or an 
$(r_1, r_2)$-semiregular bipartite graph, satisfying 
$r_1r_2 =\frac{\sum_{i=1}^{n}\left(t_2^{(i)}\right)^{2}}{\sum_{i=1}^{n}\left(c_2^{(i)}\right)^{2}}$.
\end{theorem}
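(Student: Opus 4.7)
The strategy is to pass from the digraph matrix $A$ to its geometric symmetrization $S=S(A)$ (which is the $0$--$1$ adjacency matrix of the undirected graph $G$ of doubly adjacent pairs), apply a Rayleigh-quotient estimate on $S^{2}$ with a carefully chosen vector, and then invoke Remark \ref{REM}(iii) to transfer the bound back to $\rho(D)$.

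First I would set $d=(c_{2}^{(1)},\ldots,c_{2}^{(n)})^{T}$. By Remark \ref{REM}(i), $d$ is exactly the degree vector of $G$, i.e.\ $d_i$ equals the $i$th row sum of $S$. Then
\[
(Sd)_i=\sum_{j=1}^{n}s_{ij}\,c_{2}^{(j)}=\sum_{v_j\in\overleftrightarrow{N_i}}c_{2}^{(j)}=t_{2}^{(i)},
\]
so $\|Sd\|^{2}=d^{T}S^{2}d=\sum_{i=1}^{n}(t_{2}^{(i)})^{2}$ while $\|d\|^{2}=\sum_{i=1}^{n}(c_{2}^{(i)})^{2}$. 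Since $S^{2}$ is symmetric positive semidefinite, the Rayleigh quotient bound gives
\[
\rho(S^{2})\ \ge\ \frac{d^{T}S^{2}d}{d^{T}d}\ =\ \frac{\sum_{i=1}^{n}(t_{2}^{(i)})^{2}}{\sum_{i=1}^{n}(c_{2}^{(i)})^{2}},
\]
and combining with Remark \ref{REM}(iii), $\rho(D)^{2}=\rho(A)^{2}\ge\rho(S)^{2}=\rho(S^{2})$, which yields (\ref{cotaindice}).

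For the equality characterization I would separate the two inequalities that were used. Equality in the Rayleigh step forces $d$ to lie in the eigenspace of $S^{2}$ associated with $\rho(S)^{2}$. Restricting to each connected component $H$ of $G$, the Perron--Frobenius theorem implies that $d|_{H}$ must be a linear combination of the Perron eigenvector of $H$ (eigenvalue $\rho(H)$) and, in the bipartite case, the corresponding eigenvector for $-\rho(H)$. Analyzing these two possibilities degree-componentwise, one recovers precisely the two options in the statement: $H$ is $r$-regular (non-bipartite case, $d|_{H}$ proportional to a strictly positive eigenvector), or $H$ is $(r_{1},r_{2})$-semiregular bipartite (bipartite case, $d$ constant on each part), with $r^{2}=r_{1}r_{2}=\sum(t_{2}^{(i)})^{2}/\sum(c_{2}^{(i)})^{2}$. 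Equality in $\rho(A)\ge\rho(S)$ then says the remaining arcs of $D$ contribute nothing to the spectral radius, which, combined with the fact that any arc lying on a directed cycle in $D\setminus\overleftrightarrow{G}$ would strictly enlarge $\rho(A)$, characterizes those extra arcs as not belonging to any cycle.

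The main obstacle is the equality analysis, specifically the step that translates ``$d|_{H}$ belongs to the $\pm\rho(H)$-eigenspace'' into the regular/semiregular bipartite dichotomy; the non-bipartite case is immediate from uniqueness of the Perron vector, but the bipartite case requires writing $d|_{H}$ as $\alpha p+\beta q$ (with $p$ the Perron vector and $q$ its signed companion) and showing that the degree vector has this form only when the bipartition splits the degrees into two constant values $r_{1},r_{2}$. The treatment of extra arcs in the equality case is the same as in Theorem \ref{T-C}, so I would cite that reasoning rather than reprove it.
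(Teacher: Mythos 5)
Your proposal follows essentially the same route as the paper: the inequality is obtained from the Rayleigh quotient of $S(A)^2$ at the test vector $c=(c_2^{(1)},\dots,c_2^{(n)})^T$ together with Remark \ref{REM}(iii), exactly as in the paper's proof. Your equality analysis is organized component-wise on $G$ rather than by the paper's three structural cases (strongly connected, direct sum of strong components, deletion of arcs off cycles), but it rests on the same Perron--Frobenius and irreducibility arguments and, like the paper, it passes from ``$c$ restricted to a component is a positive eigenvector of $S$'' to ``that component is regular or semiregular bipartite'' without justification --- that hypothesis only yields pseudo-regularity (respectively pseudo-semiregularity), which is in fact what the paper's own corollary quoted from \cite{Y-L-T} asserts.
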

\begin{proof}
  for the Rayleigh quotient
\begin{eqnarray}
 \sqrt{\rho(S(A)^2)}=\sqrt{max_{x\neq 0}\frac{x^TS(A)^2x }{x^Tx}}\geq \sqrt{\frac{c^TS(A)^2c }{c^Tc}}=\sqrt{\frac{\sum_{i=1}^{n}\left(t_2^{(i)}\right)^{2}}{\sum_{i=1}^{n}\left(c_2^{(i)}\right)^{2}}},
\end{eqnarray}
where $c=(c^{(1)}_2 , c ^{(2)}_2 ,..., c ^{(n)}_2)^T$. Thus, we obtain (\ref{cotaindice}).
To prove equality, we will use the ideas used in the works \cite{T-C}, \cite{H-S}, \cite{G-R},\cite{B-B-Z}. Indeed, suppose now that the equality in (\ref{cotaindice}) holds, then $$\rho(S(A)^2)=\frac{c^TS(A)^2c}{c^Tc},$$ then $c$ is a positive eigenvector of $S(A)^2$
corresponding to the eigenvalue $\rho(S(A)^2),$ 
either one or two. Next we consider three cases.
\begin{itemize}
    \item [Case 1:] D is strongly connected. \\
     A is a irreducible matrix in this case. If $A > S(A)$, then
$\rho(A) > \rho(S(A))$ as A is irreducible (see \cite{B-P}, Corollary 2.1.5), 
this contradicts our assumption of equality. Therefore we have that $A$
is a symmetric matrix, which implies that $D =\overleftrightarrow{G}$.
In this case $G$ is a connected simple graph. 
Then, similar to the proofs in (\cite{H-S}, Theorem 3.1),
one can easily obtain that $G$ is either an $r$-regular graph
or $(r_1, r_2)$-semiregular bipartite graph, satisfying $r^ 2 = r_1 r_2 =\frac{\sum_{i=1}^{n}\left(t_2^{(i)}\right)^{2}}{\sum_{i=1}^{n}\left(c_2^{(i)}\right)^{2}}$.

\item [Case 2:] $D$ is direct sum of its disjoint strongly connected components
$D_1, D_2, . . . , D_s$.
\noindent
Let $A_k$ be the $n_k$-by-$n_k$ adjacency matrix of $D_k$ and $\sum_{k=1}^{s}n_k = n$. In this case 
$$
A^{2}=\left( \begin{array}{cccc}
    A_1^{2} & & &  \\
     & A_2^{2} & &  \\
     & & \ddots&  \\
     & & &  A_s^{2}
\end{array}\right),$$
where the rest of the unspecified entries are $0$. Since the equality holds in (\ref{cotaindice}), we have
$$
\begin{array}{lllll}
      \sqrt{\rho(S(A)^2)} &= & \displaystyle \sqrt{ \max_{x\neq 0}\frac{x^TS(A)^2x }{x^Tx}}= \sqrt{\displaystyle\frac{c^TS(A)^2c }{c^Tc}}\\
     &=  & \displaystyle \sqrt{\sum_{k=1}^s\frac{c_{n_k}^TS(A_k)^2c_{n_k} }{n_k}\frac{n_k}{c_{n_k}^Tc_{n_k}}}  \\
     &\leq & \displaystyle \sqrt{\sum_{k=1}^s\frac{{n_k}\rho\left(S(A_k)^2\right)}{n}} \leq \sqrt{\max_{k}\rho(S(A_{k})^2)}\\
     &=& \displaystyle \max_{k} \sqrt{\rho(S(A_{k})^2)}=\sqrt{\rho(S(A)^2)}\\
     &=& \sqrt{\rho(A)^2}
\end{array}
$$

which implies that, for every $k = 1, 2, . . . , s$, 
$$\rho(A)=\sqrt{\rho(A^2)}=\sqrt{\rho(A_k^2)}=\sqrt{\rho(S(A_k^2))}=\sqrt{\sum_{k=1}^s\frac{c_{n_k}^TS(A_k^2)c_{n_k} }{n_k}}$$
Then, from Case (1) each $D_k =\overleftrightarrow{G_k}$, where each connected component $G_k$ is either an
$r$-regular graph or $(r_1, r_2)$-semiregular bipartite graph, satisfying $r^ 2 = r_1r_2 = \frac{\sum_{i=1}^{n}\left(t_2^{(i)}\right)^{2}}{\sum_{i=1}^{n}\left(c_2^{(i)}\right)^{2}}.$

\item [Case 3:] $\widehat{D}$ is a digraph obtained from $D$ by deleting those arcs of $D$
that do not belong to any cycle.\\
Then $S(A) = S(A(\widehat{D}))$, where $A(\widehat{D})$ is the adjacency matrix of $\widehat{D}$.
Clearly, $D$ and $\widehat{D}$ have the same cycle structure.
By Theorem 1.2 in \cite{C-D-S}, we have that $\Phi_{D}(x)=\Phi_{\widehat{D}}(x)$,
which implies that $D$ and $\widehat{D}$ also have the same eigenvalues.
On the other hand, since $\widehat{D}$ is direct sum of its some disjoint
strongly connected components, then Case (2) implies that 
$\widehat{D} =\overleftrightarrow{G}$ and each connected component of $G$
is either an $r$-regular graph or an $(r_1, r_2)$-semiregular bipartite graph,
satisfying 
$r^ 2 = r_1 r_2 =\frac{\sum_{i=1}^{n}\left(t_2^{(i)}\right)^{2}}{\sum_{i=1}^{n}\left(c_2^{(i)}\right)^{2}}$.
Hence,$D =\overleftrightarrow{G}+ \{ \textit{possibly some arcs that do not belong to cycles}\}.$

\end{itemize}
Conversely, suppose that
$D =\overleftrightarrow{G}+ \{ \textit{possibly some arcs that do not belong to cycles}\},$
where each connected component of $G$ is either an $r$-regular graph or an $(r_1, r_2)$-semiregular bipartite
graph, satisfying $r^ 2 = r_1r_2 = \frac{\sum_{i=1}^{n}\left(t_2^{(i)}\right)^{2}}{\sum_{i=1}^{n}\left(c_2^{(i)}\right)^{2}}.$ It is easy to check that the equality in (\ref{cotaindice}) holds.
\end{proof}

The result given in \cite{Y-L-T} is here re-obtained considering $D =\overleftrightarrow{G}$. 

\begin{corollary}
Let $G$ be a nonempty graph with degree sequence $d_1, d_2, · · · , d_n$ and $2$-degree
sequence $t_1, t_2, · · · , t_n$. Then
$$\lambda_1(G) \geq
\sqrt{\frac{\sum_{i=1}^{n} t_i^{2}}{\sum_{i=1}^{n} d_i^{2}}},$$
with equality if and only if G is a pseudo-regular graph or a pseudo-semiregular bipartite graph.
\end{corollary}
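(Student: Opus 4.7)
The plan is to recover the corollary as a direct specialization of Theorem to the symmetric case. Take $D=\overleftrightarrow{G}$, the symmetric digraph associated to $G$. The key step is to identify all the invariants used in Theorem with their graph counterparts.

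First I would record the dictionary: since the adjacency matrix of $\overleftrightarrow{G}$ coincides with that of $G$, we have $\rho(D)=\lambda_1(G)$ (noting that, because $G$ is nonempty, $\lambda_1(G)\geq 0$ so the spectral radius is indeed the largest eigenvalue). Because every edge of $G$ becomes a pair of mutually reverse arcs, the doubly adjacent neighborhood satisfies $\overleftrightarrow{N_i}=N_G(v_i)$, hence
\begin{equation*}
  c_2^{(i)}=\bigl|\overleftrightarrow{N_i}\bigr|=d_i,\qquad
  t_2^{(i)}=\sum_{v_j\in\overleftrightarrow{N_i}}c_2^{(j)}=\sum_{v_j\in N_G(v_i)}d_j=t_i.
\end{equation*}
Substituting these identifications into inequality (\ref{cotaindice}) of Theorem yields the stated lower bound for $\lambda_1(G)$.

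For the equality case I would simply transport the characterization in Theorem through the same dictionary. The subtlety worth spelling out is that, in a symmetric digraph $\overleftrightarrow{G}$, every arc $(u,v)$ is paired with its reverse $(v,u)$ and therefore lies on the $2$-cycle $(u,v,u)$. Consequently the clause ``possibly some arcs that do not belong to cycles'' in the equality conditions of Theorem is vacuous, and equality forces each connected component of $G$ to be either $r$-regular (with $r^2=\sum t_i^2/\sum d_i^2$) or $(r_1,r_2)$-semiregular bipartite (with $r_1r_2=\sum t_i^2/\sum d_i^2$). This is precisely the definition of $G$ being a pseudo-regular graph or a pseudo-semiregular bipartite graph.

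The only real obstacle is terminological rather than mathematical: one must verify that the ``pseudo-regular / pseudo-semiregular bipartite'' terminology used in \cite{Y-L-T} agrees componentwise with the characterization in Theorem. Once this is checked, nothing remains to prove; the corollary is obtained entirely by specializing $D$ to the symmetric digraph attached to $G$ and applying Theorem, both for the inequality and for the equality discussion.
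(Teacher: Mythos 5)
Your route is exactly the paper's: the paper offers no proof of this corollary beyond the one-line remark that it is ``re-obtained considering $D=\overleftrightarrow{G}$'', and your dictionary $\rho(D)=\lambda_1(G)$, $c_2^{(i)}=d_i$, $t_2^{(i)}=t_i$ is the correct way to make that remark precise. Your observation that in a symmetric digraph every arc lies on a $2$-cycle, so the clause ``possibly some arcs that do not belong to cycles'' is vacuous, is also correct and worth stating.

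There is, however, a genuine gap in your equality discussion, and it is mathematical rather than terminological. Theorem \ref{teo} characterizes equality by each component of $G$ being $r$-regular or $(r_1,r_2)$-semiregular bipartite, whereas the corollary asserts equality exactly for pseudo-regular and pseudo-semiregular bipartite graphs. These are different classes: a pseudo-regular graph only requires the average neighbor degree $t_i/d_i$ to equal a constant $p$ at every vertex, and need not be regular --- indeed, if the equality case were restricted to regular graphs the bound would collapse to $\lambda_1\geq r$ and the Yu--Lu--Tian result would lose its content. So your sentence ``this is precisely the definition of $G$ being a pseudo-regular graph'' is false, and transporting the theorem's stated equality clause proves a strictly stronger condition than the one claimed in the corollary. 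The root of the mismatch lies in the paper itself: equality in the Rayleigh-quotient step of Theorem \ref{teo} holds precisely when the vector $c=(c_2^{(1)},\dots,c_2^{(n)})^T$ is an eigenvector of $S(A)^2$, which for $D=\overleftrightarrow{G}$ is the pseudo-regular/pseudo-semiregular bipartite condition; the regular/semiregular characterization appears to have been carried over from the Tian--Cui bound, where the test vector is the all-ones vector. To complete your proof you would need to rerun the equality analysis with the correct eigenvector condition for $c$, rather than cite the equality clause of Theorem \ref{teo} as stated.
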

The following remark allows us to prove that the bound given in (\ref{cotaindice}) is better than the bound (\ref{-cotaindice}) given in \cite{T-C} and consequently better than the bound given in \cite{G-R}. 
\begin{remark}\label{Re}
 Note that 
$$\sqrt{\frac1n \sum_{i=1}^{n}\left(c_2^{(i)}\right)^{2}} \leq\sqrt{\frac{\sum_{i=1}^{n}\left(t_2^{(i)}\right)^{2}}{\sum_{i=1}^{n}\left(c_2^{(i)}\right)^{2}}}.$$
\begin{proof}
    In efect, by Cauchy-Schwarz inequality, we have
    $$\left(\sum_{i=1}^{n}t_2^{(i)}\right)^{2}\leq n\sum_{i=1}^{n}\left(t_2^{(i)}\right)^{2}.$$
Using this inequality and Remark \ref{REM} (part ii),
$$\sqrt{\frac{\sum_{i=1}^{n}\left(t_2^{(i)}\right)^{2}}{\sum_{i=1}^{n}\left(c_2^{(i)}\right)^{2}}} \geq \sqrt{\frac{\left(\sum_{i=1}^{n}t_2^{(i)}\right)^{2}}{n \sum_{i=1}^{n}\left(c_2^{(i)}\right)^{2}}}=\sqrt{\frac1n \sum_{i=1}^{n}\left(c_2^{(i)}\right)^{2}}.$$
\end{proof}
\end{remark}
\section{An upper bound for the energy of a digraph}\label{s3}
In this section, using the strategies given in articles \cite{B-B-Z},\cite{T-C} and \cite{Y-L-T}, we will construct a lower bound for the energy of digraph $D$, using the result obtained in \ref{teo}.

\begin{theorem}
Let $D$ digraph with $n$ vertices, $a$ arcs, with sequences $c_2^{(1)},c_2^{(2)},...,c_2^{(n)}$ and $t^{(1)}_2,t^{(2)}_2,...,t^{(n)}_2$. Then 

\begin{eqnarray}\label{t1}
 E(D) \leq \sqrt{\frac{\sum_{i=1}^{n}\left(t^{(i)}_2\right)^{2}}{ \sum_{i=1}^{n}\left(c_2^{(i)}\right)^{2}}}+\sqrt{(n-1)\left(a-\frac{\sum_{i=1}^{n}\left(t^{(i)}_2\right)^{2}}{ \sum_{i=1}^{n}\left(c_2^{(i)}\right)^{2}}\right)}.
\end{eqnarray}
The equality in (\ref{t1}) holds if and only if $D =\overleftrightarrow{G}$, where $G$ is either $\frac n2K_2,K_n$, a non-complete connected strongly regular graph with two non-trivial eigenvalues both with absolute value $\sqrt{\frac{\left(a-\frac{\sum_{i=1}^{n}\left(t^{(i)}_2\right)^{2}}{ \sum_{i=1}^{n}\left(c_2^{(i)}\right)^{2}} \right)}{(n-1)}},$ or $nK_1.$
\end{theorem}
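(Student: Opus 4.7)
The plan is to combine the Gudi\~{n}o--Rada upper bound $E(D)\le\rho+\sqrt{(n-1)(a-\rho^{2})}$ from (\ref{1}) with the sharpened lower bound for $\rho$ furnished by Theorem \ref{teo}, in exactly the same spirit as the passage from (\ref{c-i}) to (\ref{001}). Write
\[
L:=\sqrt{\frac{\sum_{i=1}^{n}(t^{(i)}_2)^{2}}{\sum_{i=1}^{n}(c_2^{(i)})^{2}}},\qquad \phi(x):=x+\sqrt{(n-1)(a-x^{2})}\quad (x\in[0,\sqrt{a}\,]).
\]
With this notation the target bound (\ref{t1}) reads $E(D)\le\phi(L)$, whereas Gudi\~{n}o--Rada already yields $E(D)\le\phi(\rho)$, so the whole proof reduces to establishing $\phi(\rho)\le\phi(L)$.

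To that end I analyse $\phi$: a direct differentiation gives $\phi'(x)=1-x\sqrt{n-1}/\sqrt{a-x^{2}}$, so $\phi$ is strictly increasing on $[0,\sqrt{a/n}\,]$, strictly decreasing on $[\sqrt{a/n},\sqrt{a}\,]$, and attains its maximum $\sqrt{na}$ at $x=\sqrt{a/n}$. By Theorem \ref{teo} one already has $L\le\rho$, so provided that $L\ge\sqrt{a/n}$ both points lie in the decreasing branch of $\phi$ and $\phi(\rho)\le\phi(L)$ follows. I expect $L\ge\sqrt{a/n}$ to be the main obstacle of the whole argument; my plan is to derive it from Cauchy--Schwarz applied to the vectors $(c_2^{(i)})_{i}$ and $(t_2^{(i)})_{i}$ together with Remark \ref{REM}(ii), namely $\sum_i t^{(i)}_2=\sum_i (c_2^{(i)})^{2}$, and a suitable comparison between $\sum_i (c_2^{(i)})^{2}$ and $a$.

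For the equality clause, equality in (\ref{t1}) forces simultaneous equality in Gudi\~{n}o--Rada and in $\phi(\rho)\le\phi(L)$. The latter, by strict monotonicity of $\phi$ on $[\sqrt{a/n},\sqrt{a}\,]$, gives $\rho=L$. The former, combined with the classification recorded in the equality clause of (\ref{001}), identifies $D$ as $\overleftrightarrow{G}$ with $G\in\{\tfrac n2 K_{2},\,K_{n},\,\text{a non-complete connected strongly regular graph},\,nK_{1}\}$; substituting $\rho=L$ into the description of the non-trivial eigenvalues of the strongly regular case replaces their common modulus with $\sqrt{(a-L^{2})/(n-1)}$, as claimed. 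Because each such $G$ is regular (or a disjoint union of regular components), the equality condition of Theorem \ref{teo} is then automatically fulfilled, and the converse direction is a routine computation of $c_2^{(i)}$, $t_2^{(i)}$, $a$ and $E(D)$ for each listed family.
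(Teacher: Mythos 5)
There is a genuine gap at the step you yourself flag as the main obstacle: the inequality $L\ge\sqrt{a/n}$, i.e. $\sum_i\bigl(t_2^{(i)}\bigr)^2\ge\frac{a}{n}\sum_i\bigl(c_2^{(i)}\bigr)^2$, is simply false for general digraphs, and no combination of Cauchy--Schwarz with Remark \ref{REM}(ii) can rescue it. Those tools only give the chain $L\ge\sqrt{\tfrac1n\sum_i(c_2^{(i)})^2}\ge \tfrac{c_2}{n}$, and since $c_2$ counts only the arcs lying on $2$-cycles, $c_2$ (and hence $L$) can stay bounded while $a/n$ grows: take, for instance, a large tournament (which has $c_2^{(i)}=0$ everywhere) with a single digon attached; then $L=1$ while $\sqrt{a/n}\sim\sqrt{n}/\sqrt{2}$. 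Your reduction of the whole proof to $\phi(\rho)\le\phi(L)$ therefore breaks down: when $L<\sqrt{a/n}\le\rho$ the two points sit on opposite sides of the maximum of $\phi$ and monotonicity gives nothing. (The inequality $\sum_i (c_2^{(i)})^2\ge a$ that you would need \emph{does} hold for symmetric digraphs, where $c_2^{(i)}=d_i$ and $a=\sum_i d_i\le\sum_i d_i^2$, which is why the analogous argument works for graphs in \cite{Y-L-T1}; it does not transfer to digraphs.)

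The paper avoids this by splitting into two cases according to the sign of $a-nL^2$. When $a\le nL^2$ your argument is exactly what is done: $\sqrt{a/n}\le L\le\rho\le\sqrt a$, so $f(\rho)\le f(L)$ on the decreasing branch. When $a>nL^2$, however, the paper does \emph{not} compare $f(\rho)$ with $f(L)$ at all; instead it invokes the previously established energy bound $E(D)\le f\bigl(\sqrt{\tfrac1n\sum_i(c_2^{(i)})^2}\bigr)$ (Theorem 2 of \cite{B-B-Z} / the Tian--Cui bound (\ref{1})) and then uses that $f$ is increasing on $\bigl[0,\sqrt{a/n}\bigr]$ together with Remark \ref{Re}, which places $\sqrt{\tfrac1n\sum_i(c_2^{(i)})^2}\le L\le\sqrt{a/n}$. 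You need this second mechanism (or some substitute for it) to cover the missing case; correspondingly, your equality analysis, which assumes $\rho=L$ is forced, only applies in the first case, while in the second case the equality characterization must be imported from the equality conditions of the auxiliary bound.
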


\begin{proof}
Let $\rho=z_1, z_2, . . . , z_n$ be the eigenvalues of the digraph $D$ such that $Re(z_1) \geq Re(z_2) \geq \cdots\geq Re(z_n)$. By Lemma \ref{LEMA} (part $(ii)$),  we have
\begin{equation}\label{arcs}
    \sum_{i=2}^n\left(Re(z_i)\right)^{2} \leq a- \rho^{2},
\end{equation}
where $a$ is the number of arcs. Using (\ref{arcs}) together with the Cauchy-Schwartz inequality, we obtain the
inequality
 $$\sum_{i=2}^n|Re(z_i)|\leq \sqrt{(n-1) \sum_{i=2}^n\left(Re(z_i)\right)^{2}}\leq \sqrt{(n-1)(a-\rho^{2})}.$$
Thus, we must have
\begin{eqnarray}\label{En}
 E(D)\leq \rho + \sqrt{(n-1)(a-\rho^{2})}.
\end{eqnarray}
Now, consider the function $f(x)=x+ \sqrt{(n-1)(a-x^{2})}, \, \, x\in [0,\sqrt{a}]$. It is easy to see that the function $f(x)$ increases strictly on the interval $\left[0, \sqrt{\frac an}\right]$ and decreases strictly on $\left[\sqrt{\frac an},\sqrt{a}\right].$ At this point, we have to analyze two cases:
\begin{itemize}
    \item [Case 1.] $a \leq n \displaystyle \frac{\sum_{i=1}^{n}\left(t_2^{(i)}\right)^{2}}{\sum_{i=1}^{n}\left(c_2^{(i)}\right)^{2}}$\\
    Then by Theorem \ref{teo} and inequality (\ref{arcs}), we have 
   $$\displaystyle \sqrt{\frac{a}{n}}\leq  \sqrt{\frac{\sum_{i=1}^{n}\left(t_2^{(i)}\right)^{2}}{\sum_{i=1}^{n}\left(c_2^{(i)}\right)^{2}}}\leq \rho \leq \sqrt{a}.$$ Thus $f (\rho) \leq f\left( \sqrt{\frac{\sum_{i=1}^{n}\left(t_2^{(i)}\right)^{2}}{\sum_{i=1}^{n}\left(c_2^{(i)}\right)^{2}}}\right)$, because $f$ is decreasing in  $\left[\sqrt{\frac an},\sqrt{a}\right].$ This implies that the inequality (\ref{t1}) holds. On the other hand, if the equality in (\ref{t1}) holds, then $$\rho = \sqrt{\frac{\sum_{i=1}^{n}\left(t_2^{(i)}\right)^{2}}{\sum_{i=1}^{n}\left(c_2^{(i)}\right)^{2}}},$$ later by theorem \ref{teo}, we have that $$D =\overleftrightarrow{G}+ \{ \textit{possibly some arcs that do not belong to cycles}\},$$ where each connected component of $G$ is either an $r$-regular graph or an $(r_1, r_2)$-semiregular bipartite graph, satisfying $r_1r_2 =\frac{\sum_{i=1}^{n}\left(t_2^{(i)}\right)^{2}}{\sum_{i=1}^{n}\left(c_2^{(i)}\right)^{2}}$. Noting that $c_2 \leq a$. By Theorem 2.1 in \cite{Y-L-T1}, we obtain
$$
\begin{array}{cccc}
     E(D)&= &E(G)\leq \displaystyle \sqrt{\frac{\sum_{i=1}^{n}\left(t^{(i)}_2\right)^{2}}{ \sum_{i=1}^{n}\left(c_2^{(i)}\right)^{2}}}+\sqrt{(n-1)\left(c_2-\frac{\sum_{i=1}^{n}\left(t^{(i)}_2\right)^{2}}{ \sum_{i=1}^{n}\left(c_2^{(i)}\right)^{2}}\right)} \\
     & \leq & \displaystyle \sqrt{\frac{\sum_{i=1}^{n}\left(t^{(i)}_2\right)^{2}}{ \sum_{i=1}^{n}\left(c_2^{(i)}\right)^{2}}}+\sqrt{(n-1)\left(a-\frac{\sum_{i=1}^{n}\left(t^{(i)}_2\right)^{2}}{ \sum_{i=1}^{n}\left(c_2^{(i)}\right)^{2}}\right)}=E(D),
\end{array}
$$
 which implies $c_2 = a$, this way we have to
 $$E(D)=\displaystyle \sqrt{\frac{\sum_{i=1}^{n}\left(t^{(i)}_2\right)^{2}}{ \sum_{i=1}^{n}\left(c_2^{(i)}\right)^{2}}}+\sqrt{(n-1)\left(c_2-\frac{\sum_{i=1}^{n}\left(t^{(i)}_2\right)^{2}}{ \sum_{i=1}^{n}\left(c_2^{(i)}\right)^{2}}\right)}.$$
Using the Theorem 2.1 in \cite{Y-L-T1}, we obtain the conditions of equality.
    
    \item[Case 2:] $a >n \displaystyle \frac{\sum_{i=1}^{n}\left(t_2^{(i)}\right)^{2}}{\sum_{i=1}^{n}\left(c_2^{(i)}\right)^{2}}.$ \\
  Using Remark \ref{Re}, we have
  $$0\leq \sqrt{\frac1n \sum_{i=1}^{n}\left(c_2^{(i)}\right)^{2}} \leq\sqrt{\frac{\sum_{i=1}^{n}\left(t_2^{(i)}\right)^{2}}{\sum_{i=1}^{n}\left(c_2^{(i)}\right)^{2}}} \leq \sqrt{\frac{a}{n}}.$$
  Therefore, we have to $\displaystyle f\left( \sqrt{\frac1n \sum_{i=1}^{n}\left(c_2^{(i)}\right)^{2}}\right) \leq f\left(\sqrt{\frac{\sum_{i=1}^{n}\left(t_2^{(i)}\right)^{2}}{\sum_{i=1}^{n}\left(c_2^{(i)}\right)^{2}}} \right),$ because, $f$ is increasing in $\left[0, \sqrt{\frac an}\right]$. Then by Theorem 2 in \cite{B-B-Z} the inequality (\ref{t1}) holds.\\
  Assume now that equality holds in (\ref{t1}), then we have that $$E(D)=\displaystyle f\left( \sqrt{\frac1n \sum_{i=1}^{n}\left(c_2^{(i)}\right)^{2}}\right) = f\left(\sqrt{\frac{\sum_{i=1}^{n}\left(t_2^{(i)}\right)^{2}}{\sum_{i=1}^{n}\left(c_2^{(i)}\right)^{2}}} \right),$$
  Thus, the characteristics of $ D $ are obtained from the conditions of the Theorem 2 in \cite{B-B-Z}
\end{itemize}
\end{proof}
The  result  given  in  \cite{Y-L-T1}  is  here re-obtained considering $D =\overleftrightarrow{G}$.
\begin{corollary}
Let $G$ be a nonempty graph with $n$ vertices, $m$ edges, degree sequence $d_1, d_2, · · · , d_n$ and $2$-degree
sequence $t_1, t_2, · · · , t_n$. Then
$$E(G) \leq
\sqrt{\frac{\sum_{i=1}^{n} t_i^{2}}{\sum_{i=1}^{n} d_i^{2}}}+\sqrt{(n-1)\left(2m-\frac{\sum_{i=1}^{n} t_i^{2}}{\sum_{i=1}^{n} d_i^{2}}\right)}.$$
Equality holds if and only if one of the following statements holds:
\begin{itemize}
    \item[(1)] $G\cong \frac n2 K_2;$
    \item [(2)] $G\cong K_n;$
     \item [(3)] $G$ is a non-bipartite connected $p$ seudo-regular graph with three distinct eigenvalues $\left(p, \sqrt{\frac{2m-p^2}{n-1}},-\sqrt{\frac{2m-p^2}{n-1}} \right),$ where $p>\sqrt{\frac{m}{n}}.$
\end{itemize}
\end{corollary}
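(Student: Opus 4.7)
The plan is to deduce this corollary as an immediate specialization of the preceding theorem to the symmetric digraph $D := \overleftrightarrow{G}$ associated with $G$. First I would set up the dictionary between the graph invariants of $G$ and the digraph invariants of $D$. Since every edge of $G$ yields two opposite arcs in $\overleftrightarrow{G}$, we have $a = 2m$. Since every neighbour of $v_i$ in $G$ becomes a doubly adjacent vertex of $v_i$ in $D$, the set $\overleftrightarrow{N_i}$ equals $N_G(v_i)$, so $c_2^{(i)} = d_i$ and
$$t_2^{(i)} \;=\; \sum_{v_j \in \overleftrightarrow{N_i}} c_2^{(j)} \;=\; \sum_{v_j \sim v_i} d_j \;=\; t_i.$$
Finally, the adjacency matrix of $\overleftrightarrow{G}$ is symmetric, so its eigenvalues coincide with the (real) eigenvalues of $G$, and therefore $E(D) = \sum_i |Re(z_i)| = \sum_i |\lambda_i| = E(G)$.

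Second, I would simply substitute these identifications into inequality (\ref{t1}) of the main theorem. The substitutions $a \mapsto 2m$, $c_2^{(i)}\mapsto d_i$, $t_2^{(i)}\mapsto t_i$, $E(D)\mapsto E(G)$ transform (\ref{t1}) verbatim into
$$E(G) \;\leq\; \sqrt{\frac{\sum_{i=1}^{n} t_i^{2}}{\sum_{i=1}^{n} d_i^{2}}}+\sqrt{(n-1)\left(2m-\frac{\sum_{i=1}^{n} t_i^{2}}{\sum_{i=1}^{n} d_i^{2}}\right)},$$
which is exactly the bound claimed. So the inequality itself requires no further computation.

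Third, for the equality characterization I would invoke the equality clause of the main theorem, which forces $D = \overleftrightarrow{H}$ for some graph $H$ in the list $\{\tfrac n2 K_2,\; K_n,\; \text{non-complete connected strongly regular},\; nK_1\}$. Since $D = \overleftrightarrow{G}$ by construction and $G$ is nonempty by hypothesis, we get $H = G$ and exclude $nK_1$, leaving cases (1) and (2) of the corollary together with a strongly-regular case that must be matched to case (3). The main obstacle is precisely this reconciliation between the "non-complete connected strongly regular" wording of the theorem and the "non-bipartite connected pseudo-regular with three distinct eigenvalues" wording of the corollary. I would resolve it by combining two facts that are already available in the excerpt: (a) equality in (\ref{t1}) forces $\rho(G) = \sqrt{\sum t_i^2/\sum d_i^2}$, so by the earlier corollary $G$ is pseudo-regular or pseudo-semiregular bipartite; and (b) the Case~2 analysis in the theorem's proof forces $\rho > \sqrt{a/n} = \sqrt{2m/n}$, which, together with the presence of a second non-trivial eigenvalue of absolute value $\sqrt{(2m-p^2)/(n-1)}$, rules out the bipartite alternative (a bipartite graph has its spectrum symmetric about $0$, which would force $p = \sqrt{2m/n}$). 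The spectrum $\{p, \sqrt{(2m-p^2)/(n-1)}, -\sqrt{(2m-p^2)/(n-1)}\}$ with $p > \sqrt{m/n}$ is exactly what remains. A direct verification using the known spectra of $\tfrac n2 K_2$ and $K_n$ then confirms sharpness of (1) and (2), completing the equality analysis.
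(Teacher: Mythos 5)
Your first two steps reproduce exactly what the paper does: the paper offers no proof of this corollary beyond the remark that it is ``re-obtained considering $D=\overleftrightarrow{G}$'', and your dictionary $a=2m$, $c_2^{(i)}=d_i$, $t_2^{(i)}=t_i$, $E(\overleftrightarrow{G})=E(G)$ is precisely the intended specialization, so the inequality itself is handled correctly and in the same way.

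Your third step, however, contains a genuine gap. You claim that ``the Case~2 analysis in the theorem's proof forces $\rho>\sqrt{a/n}=\sqrt{2m/n}$''; this misreads the case split. Case~2 is the regime $a>n\sum_i (t_2^{(i)})^2/\sum_i (c_2^{(i)})^2$, i.e.\ the regime where $\sqrt{\sum_i (t_2^{(i)})^2/\sum_i (c_2^{(i)})^2}<\sqrt{a/n}$ and $f$ is applied on its \emph{increasing} interval; nothing there places $\rho$ above $\sqrt{a/n}$, and the equality analysis in that case is delegated to Theorem~2 of \cite{B-B-Z}, not to the spectral-radius theorem. More fundamentally, the equality clause of the main theorem (non-complete connected \emph{strongly regular} graph, with $nK_1$ allowed) and the equality clause of the corollary (non-bipartite connected \emph{pseudo-regular} graph with three distinct eigenvalues and $p>\sqrt{m/n}$) are not the same list, and the paper never reconciles them --- it simply quotes the characterization from \cite{Y-L-T1}. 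Your attempt to bridge the two is the right instinct, but as written it does not close: the bipartite exclusion argument is only sketched (note that $\tfrac n2 K_2$ \emph{is} bipartite and does achieve equality, so bipartiteness can only be excluded within case~(3), after separating off cases~(1) and~(2)), and the passage from ``strongly regular with two non-trivial eigenvalues of equal modulus'' to ``pseudo-regular with three distinct eigenvalues'' is asserted rather than proved. To make this rigorous you would either have to redo the Koolen--Moulton-type equality analysis for graphs (which is what \cite{Y-L-T1} does) or cite that result directly, as the paper implicitly does.
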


\begin{remark}
 Consider the collection of digraphs of $ n $ vertices, $a$ arcs and $c_2$ walks of length  $2$
denoted and defined by:
 $$\Gamma=\{D: an< (c_2)^2 \}.$$
 Si $D \in \Gamma$, then
 $$\sqrt{\frac a n}\leq \frac{c_2}{n}\leq \sqrt{\frac1n \sum_{i=1}^{n}\left(c_2^{(i)}\right)^{2}}\leq \sqrt{\frac{\sum_{i=1}^{n}\left(t_2^{(i)}\right)^{2}}{\sum_{i=1}^{n}\left(c_2^{(i)}\right)^{2}}} \leq \rho \leq \sqrt{a}.$$
Since the function $f$ is strictly decreasing on the interval  $\left[\sqrt{\frac an},\sqrt{a}\right],$ we have that:
$$E(D) \leq f(\rho) \leq f\left(\sqrt{\frac{\sum_{i=1}^{n}\left(t_2^{(i)}\right)^{2}}{\sum_{i=1}^{n}\left(c_2^{(i)}\right)^{2}}}\right)\leq f\left(\sqrt{\frac1n \sum_{i=1}^{n}\left(c_2^{(i)}\right)^{2}} \right)\leq f\left( \frac{c_2}{n} \right)$$

In this way, we can affirm that for all $G \in \Gamma$,   the bound given in (\ref{t1}) is better than the bound (\ref{1}) given in \cite{T-C} and consequently better than the bound (\ref{001}) given in \cite{G-R}.
\end{remark}

\section*{References}


\begin{thebibliography}{99}
\bibitem{A-C-I-R} E. Andrade, J.R. Carmona, G. Infante, M. Robbiano, New lower bounds for the energy of matrices and graphs.(2019) arXiv:1903.01326 
\bibitem{A-B-G} S.K. Ayyaswamy, S. Balachandran, I. Gutman, Upper bound for the energy of strongly connected digraphs, Applicable Anal.
Discrete Math. 5 (2011) 37-45.
\bibitem{B-P} Berman A, Plemmons RJ. Nonnegative matrices in the mathematical sciences. New York (NY):
Academic; 1979; Philadelphia (PA): SIAM; 1994.
\bibitem{Bozkurt}  \c{S}. B. Bozkurt Alt{\i}nda\u{g}, D. Bozkurt. Lower bounds for the energy of (bipartite) graphs,  MATCH Commun. Math. Comput. Chem. 77 (2017): 9-14.
\bibitem{B-B-Z} \c{S}. B. Bozkurt Alt{\i}nda\u{g}, D. Bozkurt and X-D. Zhang, On the spectral radius and the energy of a digraph, Linear and Multilinear Algebra, 63 (2015), No. 10, 2009-2016
\bibitem{B} R. Brualdi, Spectra of digraphs, Linear Algebra Appl. 432 (2010) 2181-2213.

\bibitem{C} C.A. Coulson, On the calculation of the energy in unsaturated hydrocarbon molecules, Proc. Cambridge Phil. Soc. 36 (1940)
201-203.
\bibitem{C-D-S}  D.M. Cvetkovi\'c, M. Doob, H. Sachs, Spectra of Graph, Theory and Application ,Academic Press, New York, (1980).
\bibitem{C-G-R}R. Cruz, H. Giraldo and J. Rada, An upper bound for the energy of radial digraphs, Linear Algebra Appl., 442 (2014), 75-81.
\bibitem{G-R} E. Gudi\~{n}o, J. Rada, A lower bound for the spectral radius of a digraph, Linear Algebra Appl. 433 (2010) 233-240.
\bibitem{G} I. Gutman, The energy of a graph, Ber. Math.–Statist. Sekt. Forschungsz. Graz. 103 (1978) 1-22.
\bibitem{G1} I. Gutman, The energy of a graph: Old and new results, in: A. Betten, A. Kohnert, R. Laue,
A. Wassermann (Eds.), Algebraic Combinatorics and Applications, Springer, Berlin, 2001,
pp. 196-211.
\bibitem{G2} I. Gutman, X. Li, J. Zhang, Graph energy, in: M. Dehmer, F. Emmert–Streib (Eds.), Analysis of
Complex Networks. From Biology to Linguistics, Wiley–VCH, Weinheim, 2009, pp. 145-174.
\bibitem{G-F-Z-G}I. Gutman , B. Furtula , E. Zogic , E. Glogic , Resolvent energy of graphs, MATCH Commun. Math. Comput. Chem. 75 (2) (2016) 279-290.
\bibitem{H-S} Hong Y, Zhang XD. Sharp upper and lower bounds for the largest eigenvalue of the Laplacian matrices of trees. Discrete Math. 2005;296:187-197.
\bibitem{H-J} R. Horn, C. Johnson, Matrix Analysis, Cambridge University Press, (1985).
\bibitem{J} A. Jahanbani, Lower bounds for the energy of graphs. AKCE International Journal of Graphs and Combinatorics. 15 (2018): 88-96.
\bibitem{L-L-T}H. Liu, M. Lu and F. Tian, Some upper bounds for the energy of graphs, J. Math. Chem. 41 (2007) 45-57
\bibitem{L-R} W. L\'opez, J. Rada, Equienergetic digraphs, Int. J. Pure Appl. Math. 36 (3) (2007) 361-372.
\bibitem{K-M}V. Moulton, J.H. Koolen, Maximal energy graphs, Adv. Appl. Math. 26 (2001) 47-52.
\bibitem{L-S-G}Li X, Shi Y, Gutman I. Graph energy. New York (NY): Springer; 2012.
\bibitem{P-R} I Pe\~{n}a and J. Rada. Energy of digraphs. Linear and Multilinear Algebra, 56: 565-579, 2008.
\bibitem{R} J. Rada, The McClelland inequality for the energy of digraphs, Linear Algebra Appl. 430 (2009) 800-804.
\bibitem{R2} J.Rada, Bounds for the energy of normal digraphs,Linear Multilinear Algebra,60 (3)(2012) 323-332.

\bibitem{T-C} G-X. Tian, S-Y Cui, On upper bounds for the energy of digraphs, Linear Algebra and its Applications 438 (2013) 4742-4749.
\bibitem{Y-L-T} A. Yu, M. Lu and F. Tian, On the spectral radius of graphs, Linear Algebra and its Applications, 387(2004), 41-49.
\bibitem{Y-L-T1} A. Yu, M. Lu, F. Tian, New upper bound for the energy of graphs, MATCH
Commun. Math. Comput. Chem. 53 (2005), 441–448.




\end{thebibliography}
\end{document}